\documentclass{amsart}
\usepackage[utf8]{inputenc}
\usepackage[left=1.25in,right=1.25in,top=0.75in,bottom=0.75in]{geometry}

\usepackage{amssymb,amsmath,amsthm, mathtools, mathrsfs, stmaryrd}
\usepackage[dvipsnames]{xcolor}
\usepackage{tikz-cd}
\usetikzlibrary{decorations.pathmorphing}
\usepackage{url}
\usepackage{graphicx}
\usepackage{hyperref}
\hypersetup{
  colorlinks,
  citecolor= Cerulean,
  linkbordercolor=pink, 
  linkcolor=red,
  urlcolor= green
 }
\tikzset{
    dot/.style={
        node contents={},
        circle,
        fill,
        inner sep=1pt,
    },
    big dot/.style={
        node contents={},
        circle,
        fill=gray,
        inner sep=2.5pt,
    },
    fuzzy/.style={
        node contents={},
        circle,
        inner color=black,
        outer color=white,
        inner sep=6pt,
    },
    every label/.style={
        inner sep=1pt,
    },
    every pin/.style={
        pin edge={thin, black, shorten <=3pt},
        pin distance=1cm,
        anchor=north,
        inner sep=1pt,
    }
}
\hypersetup{linktocpage}
\theoremstyle{definition}
\newtheorem{defi}{Definition}[section]
\newtheorem{theorem}{Theorem}[section]
\newtheorem{lemma}{Lemma}[section]

\newtheorem{prop}{Proposition}[section]
\newtheorem{coro}{Corollary}[section]

\DeclareMathOperator{\spec}{Spec}
\DeclareMathOperator{\im}{im}

\DeclareMathOperator{\proj}{Proj}

\newtheorem{ex}{Example}[section]
\newcommand{\aff}{\mathbf A}
\newcommand{\ff}{\mathbf F}
\newcommand{\rr}{\mathbf R}

\newcommand{\zz}{\mathbf Z}

\newcommand{\qq}{\mathbf Q}

\newcommand{\pp}{\prime}
\newcommand{\Mod}[1]{\ (\mathrm{mod}\ #1)}

\email{jserrato@usc.edu}
\begin{document}
\title{On the prime spectrum of the $p$-adic integer polynomial ring with a depiction}
\author{Juan Serratos}
\begin{abstract} In $1966$, David Mumford created a drawing of $\proj \zz[X,Y]$ in his book, \textit{Lectures on Curves on an Algebraic Surface}. In following, he created a photo of a so-called \textit{arithmetic surface} $\spec \zz[T]$ for his $1988$ book, \textit{The Red Book of Varieties and Schemes}. The depiction presents the structure of $\spec \zz[T]$ as being interesting and pleasant, and is a well-known picture in algebraic geometry. Taking inspiration from Mumford, we create a drawing similar to $\spec \zz_p[T]$, which has a lot of similarities  with $\spec \zz[T]$.
\end{abstract}
\maketitle
\setcounter{tocdepth}{4}
\setcounter{secnumdepth}{4}
\section{Introduction} 
\subsection{Mumford's picture}The reason for this article is to create a depiction of the prime spectrum of $\zz_p[T]$, in a satisfactory manner, and doesn't claim any possession of originality. There are a few difficulties in doing this because, for a choice of prime $p$, we have a different situation of irreducible polynomials in $\qq_p[T]$ in comparison to other primes, but this is expected. A central example of this surrounds the polynomial  $f(T) = T^2+1$, which Mumford grounded his depiction of $\spec \zz[T]$ with: For a prime $p$ with $p \equiv 1 \Mod{4}$, the polynomial $f(T)$ admits a solution in $\qq_p[T]$, meaning that it is reducible as $f(T)$ is quadratic, so we can't use $f(T)=T^2+1$ in a satisfactory way as Mumford did for $\spec \zz[T]$. 

 \begin{figure*}[htbp] 
	\includegraphics[scale = .8]{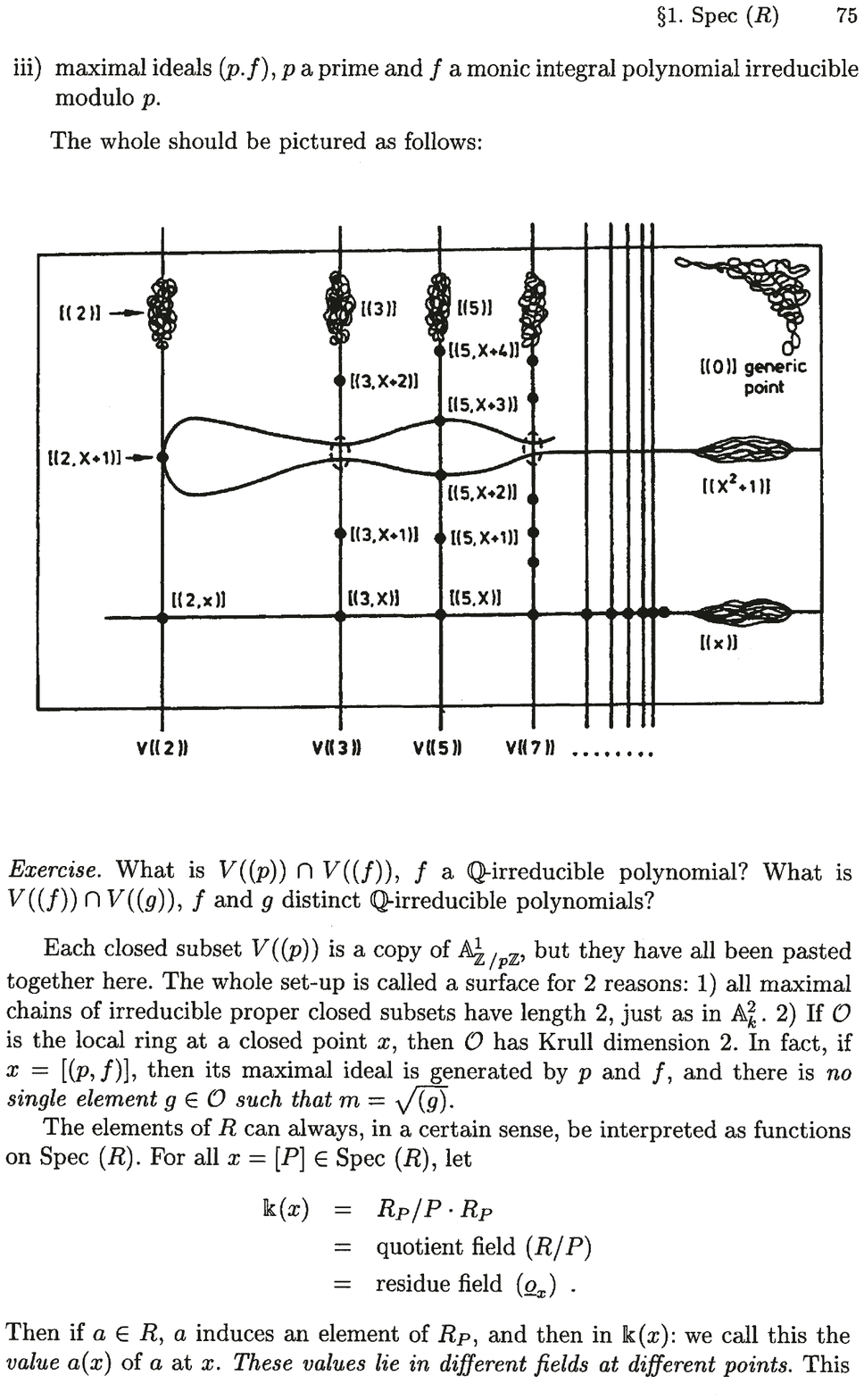}
	\caption{Depiction of $\spec \zz[T]$ ({\protect\cite[II.1, p. 75]{Mum1}}).}
    \label{fig: 1} 
\end{figure*}

Mumford's picture of $\spec \zz[T]$, as shown in Figure \ref{fig: 1}, surrounds quotients of $\zz[T]$. Using a technique in algebraic geometry (which we'll use for $\spec \zz_p[T]$ later), or one could instead doing this by hand, we find out that $\spec \zz[T]$ is in bijection with $\spec \zz/p\zz[T]$ and $\spec \qq[T]$.\footnote{As such we classify $\spec \zz[T]$ as:
\begin{itemize}
	\item [(i)] $(0)$, 
	\item [(ii)] $(p)$ for $p$ prime,
	\item [(iii)] $(f(T))$ with $f(T)$ irreducible in $\zz[T]$, and
	\item [(iii)] $(p, f(T))$ where $p$ is prime and $f(T) \in \ff_p[T]$ is irreducible.
\end{itemize} } In relation to Mumford's picture, we give \textit{greater} importance to those that say more once modded out of $\zz[T]$. Additionally, Mumford has made the point of depicting the ideals of form $(p)$ with some volume as they will contain the points below them on their respective horizontal line, and the same is true of $(T^2+1)$ and $(T)$; recall that when talking about geometry, we invert inclusions. For $(p,f(T))$ where $f(T)$ is $\ff_p$-irreducible, we quotient them with $\zz[T]$ to get: $\zz[T]/(p,f(T)) = (\zz/p\zz [T])/(f(T))= \ff_p[T]/(f(T))$ which results in a field. We cannot depict these all so we just depict the linear polynomials $g(T)$, which are maximal as $\zz[T]/(p,g(T)) = \ff_p[T]/(g(T)) = \ff_p$.
 
 Mumford additionally wanted us to think of $\aff_{\zz}^1$ as a union of the $\aff_{\ff_p}^1$ lines and $\aff_{\qq}^1$ whereby $\aff^1_\qq $ behaves as a horizontal line that interacts with the union of vertical lines of $\aff^1_{\ff_p}$, as outlined in his newest draft, \textit{Algebraic Geometry II (a penultimate draft)} (see {\cite[ 4.1, pp. 119-121]{Mum2}}). The main interaction still deals with taking quotients and inspecting the results. For example, Mumford anchors this philosophy by passing  $f(T) = T^2+1$ throughout the lines of $\aff_{\ff_p}^1$ in the vertical manner and comparing \textit{significances} whether or not drawing big and empty circles ("blips"). He draws a blip, in respect to $f(T)$, on the $\aff_{\ff_7}^1$ line since $\zz [T]/(7,T^2+1) = (\zz/7\zz[T])/(T^2+1) = \ff_{7}[T]/(T^2+1) = \ff_{7^2}$; note that $f(T)$ remains irreducible in $\ff_7[T]$ as it admits not roots. Whereas, he draws no blips on $\aff_{\ff_5}$ or $\aff_{\ff_2}^1$ as $\zz[T]/(2, T^2+1) = \ff_2[T]/(T^2+1) $ and $\zz[T]/(5,T^2+1) = \ff_5[T]/(T^2+1) $ are not even fields: $f(1) = (1)^2+1 \equiv 0 \Mod{2}$ in $\ff_2$ and $f(2) = (2)^2+1 \equiv 0 \Mod {5}$ in $\ff_5$, so neither remain irreducible after processing quotients.

Our main issue is in whether or not to stay close to Mumford's picture of $\spec \zz[T]$ by also anchoring our depiction with $f(T) = T^2+1$ and considering the specific case of $p \not \equiv 1 \Mod {4}$, which is in some ways \textit{unnatural}. Lastly, there is a disappointing note: The intersection of $\spec \qq_p[T]$ and $\spec \qq[T]$ share a common quadratic polynomial, namely, for prime $p$, we have $q(T) = T^2+p \in \spec \qq_p[T] \cap \spec \qq[T]$, yet in terms of our picture of $\spec \zz_p[T]$, it isn't that fascinating. This is because $\zz_p[T]/(p,T^2+p) = (\zz_p/p\zz_p [T])/(T^2+p) = \ff_p[T]/(T^2+p)= \ff_p[T]/(T^2)$, which is not an integral domain; the polynomial $q(T)$ sees a desert of $\aff _{\zz_p}^1$ as its water is not capable of stretching to form an oasis. 
\subsection{Outline} In the following section, we give some background to the \textit{ring of $p$-adic integers} and as well as the \textit{$p$-adic numbers}. It's essential to know two reformulations of Gauss' Lemma and Eisenstein to think about irreducibility of polynomials in $\zz_p[T]$. The reader who has experience with undergraduate algebra and some knowledge about projective limits is perfectly capable, but the reader with only this experience will likely not follow everything once we talk about some algebraic geometry things in $\S$\ref{sec: 3} and should perhaps think of this section as a black box. In addition, we simply state a "stronger" version of Hensel's Lemma and derive Hensel's lemma from there. In $\S$\ref{sec: 3}, we establish, somewhat quickly, what $\spec \zz_p[T]$ is; this comes from a well-known technique from algebraic geometry in using fibres of maps, namely, we look at the fibres of the induced map $\pi \colon \spec \zz_p[T] \to \spec \zz_p$. For the reader who doesn't know about the Zariski topology, we remark here that given a ring $A$, we can endow the space $X = \spec A$, the set of prime ideals of $A$, with a topology. One can think of the cartoons being drawn of the prime spectrums as a picture of a topology instead if they prefer this perspective. We in the end of $\S$\ref{sec: 3} give the picture of $\spec \zz_p[T]$ for $p \not \equiv 1 \Mod {4}$ anchored around $T^2+1$ and present how $T^2+p$ doesn't see a lot of $\spec \zz_p[T]$; the reader themselves can make variations for such a picture of $\spec \zz_p[T]$ for the choice of the $p$ prime or the polynomial they wish to ground the cartoon on. 
\section{\textit{p}-adic numbers}
\label{sec: 2} 
\subsection{Definitions and Basics} One has many ways to start to talk about the $p$-adics. There's the more "analytic" construction that describes the $p$-adic numbers as being formal power series expressions, that is, $\qq_p : = \left \{ \sum_{k=n}^\infty  a_n p^n  \colon a_n \in \ff_p\right \}$, and $\zz_p$ is the subring of $\qq_p$ for which the power series expressions begin at $n=0$. For our purposes, we won't dedicate any more time to this route, but instead we're going to describe $\zz_p$ as a projective limit and its discrete valuation ring description. The \textit{ring of $p$-adic integers} is defined as the projective limit $ \zz_p := \varprojlim _{n} \zz/ p^n \zz $ given by the sequence of of ring maps
\[
\begin{tikzcd}                                               &             &             &   \\
\zz_p \arrow[d] \arrow[rd] \arrow[rrd] \arrow[rrrd, shift left ] &             &             &   \\
\cdots  \arrow[r]                                     & \zz/p^3 \zz  \arrow[r] & \zz/p ^2\zz  \arrow[r] & \zz /p\zz 
\end{tikzcd}
\] where the sequence of maps are given by $a +p^n \mapsto a+p^{n-1} \colon \zz/p^n\zz \to \zz/p^{n-1}\zz$. By construction, since we're taking an inverse limit of ring $\zz/p^n \zz$ for $n \geq 1$, we have that $\zz_p$ will inherit a ring structure from its family of rings which constitute it. We have the following characterization:
\[
\zz_p := \varprojlim_{n} \zz/p^n\zz  = \left \{ (a_n)_{n \geq 1} \colon a_n \in \zz/p^n \zz, \; a_{n+1} \equiv a_n \Mod{p^{n} } \right \}
\]

That is, an element $x \in \zz_p$ is a sequence $x = (x_1, x_2, \ldots)$ whereby $x_{n+1} \equiv x_n \Mod {p^n}$ for all $n \geq 1$. For $x=(x_1, x_2, \ldots )$ and $ y = (y_1, y_2, \ldots)$ in $\zz_p$, we write $x+y = (x_1 + y_1, x_2+y_2, \ldots )$ and $xy = (x_1 y_1, x_2 y_2, \ldots )$, and our multiplicative identity is $1= (1,1,\ldots )$. Lastly, note here that we can embed $\zz \to \zz_p$ by mapping some $x \in \zz$ to $i(x) = (x,x,\ldots )$; that is, $x$ can be represented as $(x \! \mod {p}, x \! \mod p^2, x \! \mod p^3, \ldots) $. For example, for $x = 200 \in \zz$, we embed into $\zz_3$ as $(2,2,11,38,200,200, \ldots)$.
\begin{defi}
The \textit{field  of $p$-adic numbers} $\qq_p$ is the field of fractions of $\zz_p$, i.e. $\qq_p := \zz_p [\frac{1}{p}]$.
\end{defi}
Although we've characterized $\zz_p$ in this algebraic manner, we can once again (quite remarkably) go about this in a different (but equivalent) manner once again. We can describe $\zz_p$ in terms of \textit{valuations}, which is incredibly fruitful and an inescapable tool. We should note here that these reformulations are isomorphic and have an added layer of also a homeomorphism between them. We recall the following definition:
\begin{defi}
	A \textbf{discrete valuation} is a map $v \colon k \to \zz \cup \{ + \infty \} $ on a field $k$ satisfying:
\begin{itemize}
	\item [(i)] $v ( x y) = v (x) + v (y)$, 
	\item [(ii)] $ v (x+y) \geq \min \{ v(x), v (y) \} $, and
	\item [(iii)] $v (x) = \infty \Leftrightarrow x = 0$. 
\end{itemize}
\end{defi}
Before describing the $p$-adic valuation for the field $k = \qq$, we define it as a preliminary step for the integers. If $x \in \zz^\ast = \zz \setminus \{0 \}$, we define $v_p (x)$ to be the unique positive integer satisfying $x = p^{v_p(x)} x^\pp$, where $p$ does not divide $x^\pp$, and $v_p(0) = +\infty $. We extend this to $k= \qq$, by defining $v_p \colon \qq^\ast  \to \zz \cup \{ \infty \}$ by $v_p (\frac{a}{b}) = v_p (a) - v_p (b)$ where $\frac{a}{b} \neq 0$, for which this mapping is indeed a discrete valuation. Furthermore, we define the \textit{$p$-adic absolute value} $|\cdot|_p \colon \qq^\ast \to \rr_{\geq 0}$ by $x\mapsto |x|_p=p^{-v_p(x)}$ and $|0|_p = 0$. This absolute value is in fact \textit{nonarchimedean}, meaning that it satisfies the usual absolute value axioms but has the stronger condition that $|x-y|_p \leq \max \{ |x|_p, |y|_p \} \leq |x|_p + |y|_p$. One then defines a metric $d_p (x,y) = |x-y|_p$ with respect to the $p$-adic absolute value to give us a metric space, and the completion with respect to $d_p$ is the field of $p$-adic numbers. We define the \textit{ring of $p$-adic integers} as $\zz_p := \{ x \in \qq_p \colon |x|_p \leq 1 \} = \{ x \in \qq_p \colon v_p(x) \geq 0 \}$. 
\begin{lemma}\
\begin{itemize}
	\item [(i)] For all $q \in \qq _p^\ast $, we have $q \in \zz_p$ or $q^{-1} \in \zz_p$.
	\item [(ii)] The group of units of $\zz_p$ is $\zz^\times = \{ a \in \zz_p \colon v_p(a) = 0 \} = \{ a \in \zz_p \colon |a|_p = 1 \}$.
\end{itemize}
\label{lem: 2.1}
\end{lemma}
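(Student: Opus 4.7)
The plan is to reduce everything to the additivity property $v_p(xy) = v_p(x) + v_p(y)$ of the discrete valuation, together with the bookkeeping identity $|\cdot|_p = p^{-v_p(\cdot)}$ that ties valuations to absolute values. The definition $\zz_p = \{x \in \qq_p : v_p(x) \geq 0\}$ reduces both statements to elementary inequalities on integers (or $+\infty$).

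For part (i), I would start with $q \in \qq_p^\ast$ and observe that $v_p(q) \in \zz$, since $q \neq 0$ forces $v_p(q) \neq +\infty$. Applying multiplicativity to $1 = q \cdot q^{-1}$ gives $v_p(q) + v_p(q^{-1}) = v_p(1) = 0$, so $v_p(q^{-1}) = -v_p(q)$. Since one of $v_p(q)$ and $-v_p(q)$ is nonnegative, one of $q$ or $q^{-1}$ lies in $\zz_p$ by definition.

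For part (ii), the second equality $\{a \in \zz_p : v_p(a) = 0\} = \{a \in \zz_p : |a|_p = 1\}$ is immediate from $|a|_p = p^{-v_p(a)}$, since $p^{-n} = 1$ iff $n = 0$. For the first equality, I would argue two inclusions. If $a \in \zz_p^\times$, both $a$ and $a^{-1}$ lie in $\zz_p$, so $v_p(a) \geq 0$ and $v_p(a^{-1}) \geq 0$; but their sum is $v_p(1) = 0$, forcing $v_p(a) = 0$. Conversely, if $a \in \zz_p$ with $v_p(a) = 0$, then $a \neq 0$, so $a^{-1} \in \qq_p$ exists and satisfies $v_p(a^{-1}) = -v_p(a) = 0 \geq 0$, hence $a^{-1} \in \zz_p$ and $a$ is a unit.

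There is no real obstacle here; the only subtlety is being careful about the $+\infty$ case (handled by restricting to $\qq_p^\ast$ throughout), and that the passage between valuation and absolute value is monotone-reversing, so that ``$v_p \geq 0$'' and ``$|\cdot|_p \leq 1$'' really do carve out the same subset.
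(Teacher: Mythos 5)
Your proposal is correct and follows essentially the same route as the paper: both parts reduce to the identity $v_p(q)+v_p(q^{-1})=v_p(1)=0$ together with the description $\zz_p=\{x : v_p(x)\geq 0\}$, and the equivalence of $v_p(a)=0$ with $|a|_p=1$ via $|a|_p=p^{-v_p(a)}$. Your write-up of part (ii) as two explicit inclusions is in fact a bit cleaner than the paper's chain of biconditionals, but the underlying argument is identical.
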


\begin{proof}
(i) Let $q$ be nonzero in $\qq_p$. Then $v_p(1) = v_p(q q^{-1}) = v_p(q)+v_p(q^{-1})=0$, so we must have  that either $v_p(q) \geq$ or $v_p(q^{-1})\geq 0$, and thus $q \in \zz_p$ or $q^{-1} \in \zz_p$. 

(ii) Let $q \in \zz_p$ with $q \neq 0$, so we have $v_p(q) \geq 0$. But $q \in \zz_p^\times$ if and only $q^{-1} \in \zz_p$,  so $v_p(q^{-1}) = -v_p(q) \geq 0$, and if and only if $v_p(q) = 0$. As $v_p(a) =0$, then $|a|_p = 1$ since $|a|_p = \frac{1}{p^0} = 1$, and if $|x|_p =1$, then $1 = p^{v_p(x)} $ so $v_p(x) = 0$. 
\end{proof}
Beyond $\zz_p$ being an integral domain, it enjoys many other structural properties of rings. Consider the set $\mathfrak m_p =\{x\in \qq_p \colon |x|_p  < 1 \} \subset \zz_p$, which is an ideal of $\zz_p$ as $x,y \in \mathfrak m_p$ and $z \in \zz_p$ gives $|zx+y|_p \leq \max \{ |zx|_p, |y|_p \}$ and $|zx|_p = |z|_p |x|_p <1$ so $zx+y \in \mathfrak m_p$. It is straightforward to show $\mathfrak m_p=p \zz_p$. We can show that $\mathfrak m_p$ is in fact maximal and unique, and thus makes $(\zz_p, \mathfrak m _p)$ a local ring. Before we show this, and a few other important qualities of $\zz_p$, note that any $a \in \zz_p$, can be written uniquely as $a = p^n t$ with $t \in \zz_p^\times$ and $n \in \zz_{\geq 0 }$. Since if we let $n = v_p(a)$ then we can write $a = p^n t $ for some $p \nmid t$ by the fundamental theorem of arithmetic. As $p \nmid t$, then $v_p(t) = 0$, so $|t|_p = 1$, and thus by Lemma \ref{lem: 2.1} $t \in \zz_p^\times$, and lastly uniqueness is clear by cancellation. 
\begin{theorem} The integral domain $(\zz_p, \mathfrak m _p)$ is a local principal ideal domain and every nonzero ideal is of the form $I =  p^n\zz_p$.
\label{thm: 2.1}
\end{theorem}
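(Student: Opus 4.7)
The plan is to leverage the unique factorization $a = p^n t$ with $t \in \zz_p^\times$ and $n \in \zz_{\geq 0}$ established in the paragraph just preceding the theorem. Everything essentially reduces to reading off the $p$-adic valuation.

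First I would verify that $\mathfrak{m}_p = p\zz_p$ is the unique maximal ideal. Using the factorization, an element $a = p^n t$ is a unit precisely when $n = 0$ (equivalently, $v_p(a) = 0$), by Lemma \ref{lem: 2.1}. Consequently, the set of non-units of $\zz_p$ is exactly $\{a \in \zz_p : v_p(a) \geq 1\} = p\zz_p = \mathfrak{m}_p$. Since the non-units form an ideal, every proper ideal is contained in $\mathfrak{m}_p$, so $\mathfrak{m}_p$ is the unique maximal ideal and $(\zz_p, \mathfrak{m}_p)$ is local.

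Next I would classify the nonzero ideals. Given a nonzero ideal $I \subseteq \zz_p$, the set $\{v_p(a) : a \in I \setminus \{0\}\}$ is a nonempty subset of $\zz_{\geq 0}$, so it has a least element $n$. Pick $a \in I$ realizing this minimum, and write $a = p^n t$ with $t \in \zz_p^\times$; then $p^n = t^{-1} a \in I$, giving $p^n \zz_p \subseteq I$. Conversely, any nonzero $b \in I$ has $v_p(b) \geq n$, so $b = p^{v_p(b)} s = p^n \cdot p^{v_p(b)-n} s \in p^n \zz_p$ (with $s \in \zz_p^\times$). Hence $I = p^n \zz_p$, which also shows $\zz_p$ is a PID.

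I don't expect a serious obstacle: the only subtle point is being careful that the minimum of valuations is actually attained (it is, since $v_p$ takes values in $\zz_{\geq 0}$ on $\zz_p$), and that the bridge from the uniform factorization $a = p^n t$ to the algebraic structure statements is exactly what does the work here.
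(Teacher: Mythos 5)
Your proposal is correct and follows essentially the same route as the paper: both arguments rest on the observation that every element outside $p\zz_p$ is a unit (so every proper ideal is contained in $\mathfrak m_p$), and both classify a nonzero ideal $I$ by selecting an element of minimal valuation $n$ and deducing $I = p^n\zz_p$. If anything, your write-up is slightly more explicit than the paper's --- you spell out that ``let $a \in I$ be minimal'' means minimal $p$-adic valuation, note why that minimum is attained, and give the two-sided inclusion $p^n\zz_p \subseteq I \subseteq p^n\zz_p$ in full.
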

\begin{proof}
	Let $J$ be an ideal of $\zz_p$ with $\mathfrak m_p \subsetneq J \subset \zz_p$. Take $x \in J \setminus \mathfrak m_p $. As $x \in \zz_p$, then $|x|_p \leq 1$ but also since $x \notin \mathfrak m_p$ then we must have $|x|_p= 1$. So $x$ is invertible, and thus $|x^{-1}|_p= \frac{1}{|x|_p} = 1$, meaning $x^{-1} \in \zz_p$. As $J$ is an ideal, and $x \in J$, then $x x^{-1} = 1 \in J$, i.e. $J = \zz_p$. Hence $\mathfrak m_p$ is a maximal ideal of $\zz_p$. To show uniqueness, let $L \subsetneq \zz_p$ be a maximal ideal. Assume that $L$ is distinct from $ \mathfrak m_p$, i.e. there exists $x \in L \setminus \mathfrak m_p$. Then we get a contradiction (following the same argument as above), so there does not exist such an $x$ in their difference. Thus $L \subset \mathfrak m_p$, but as $L$ is maximal then we that either $L = \mathfrak m_p$ or $\mathfrak m_p = \zz_p$, and hence we conclude that $\mathfrak m_p = L$. 
		
	As $(p) \subset \zz_p$ is the generator of $\mathfrak m_{p}$, then every element $x \in \zz_p$ can be written uniquely as $up^n$ where $u \in \zz_p^\times$. Let $a \in I$ be minimal. Then $a = p^n \ell$ with $\ell \in \zz_p ^\times$, but then $p^n$ divides all elements of $I$ and $p^n \in I$, so $I = (p^n) =p^n \zz _p$. 
\end{proof}
\begin{coro}
$ \zz_p$ is a regular local ring, meaning that $\dim \zz_p = 1$ and $\mathfrak m_p$ is generated by a single element.
\end{coro}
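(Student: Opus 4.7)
The plan is to chain together what Theorem \ref{thm: 2.1} has already delivered. Locality is free: the theorem explicitly identifies $\mathfrak{m}_p$ as the unique maximal ideal of $\zz_p$. That $\mathfrak{m}_p$ is generated by a single element is also essentially built in, since the proof of Theorem \ref{thm: 2.1} exhibits $\mathfrak{m}_p = p\zz_p = (p)$. So the only real content of the corollary is the dimension claim.

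For $\dim \zz_p = 1$, I would enumerate the prime ideals. By Theorem \ref{thm: 2.1}, every nonzero ideal of $\zz_p$ has the form $p^n \zz_p$ for some $n \geq 0$. Among these, the only one that is prime is $p\zz_p = \mathfrak{m}_p$: indeed, for $n \geq 2$, the ideal $p^n \zz_p$ contains $p \cdot p^{n-1}$ while containing neither factor, so it is not prime, and for $n = 0$ the ideal is all of $\zz_p$, which is excluded by definition. Adjoining the zero ideal, which is prime because $\zz_p$ is an integral domain, the full list of prime ideals is $\{(0), (p)\}$.

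Hence the longest strictly ascending chain of prime ideals in $\zz_p$ is
\[
(0) \subsetneq (p),
\]
which has length $1$, so $\dim \zz_p = 1$. Combined with the fact that $\mathfrak{m}_p = (p)$ is principal, this matches the definition of a regular local ring of dimension $1$ (the embedding dimension equals the Krull dimension).

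The argument is almost entirely bookkeeping on top of Theorem \ref{thm: 2.1}; there is no genuine obstacle. The only point that requires a brief sentence of care is the claim that $p^n \zz_p$ fails to be prime for $n \geq 2$, and this is immediate from observing $p \cdot p^{n-1} \in p^n \zz_p$ while $p \notin p^n \zz_p$ (since $v_p(p) = 1 < n$), using the valuation description of $\zz_p$ to prevent any ambiguity about ideal membership.
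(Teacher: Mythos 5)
Your proposal is correct, and it is essentially the argument the paper intends: the corollary is stated without proof as an immediate consequence of Theorem \ref{thm: 2.1}, and your write-up simply makes that deduction explicit (locality and $\mathfrak m_p=(p)$ are read off from the theorem, and the classification of nonzero ideals as $p^n\zz_p$ pins down the prime ideals as $(0)\subsetneq (p)$, giving $\dim \zz_p=1$). The one step worth a half-sentence more is that $(p)$ is indeed prime, which follows since $\mathfrak m_p$ is maximal by Theorem \ref{thm: 2.1} (or since $\zz_p/p\zz_p\simeq \ff_p$ is a field).
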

\begin{prop}The sequence 
\[
0 \to \zz_p \to \zz_p \to \zz/p^n \zz \to 0 
\]is an exact sequence where $\zz_p \to \zz_p$ is multiplication by $p^n$ and $\zz_p \to \zz/p^n\zz$ where $a = (a_1, a_2, \ldots) \in \zz_p$ is mapped to the $n$-term, i.e. $a \mapsto a_n$.
\end{prop}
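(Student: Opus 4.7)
The plan is to verify the three exactness conditions in turn. The left map $m_{p^n}\colon \zz_p \to \zz_p$ is multiplication by a nonzero element of the integral domain $\zz_p$ (noted just before Theorem \ref{thm: 2.1}), so it is injective. The right map $\pi_n\colon (a_k)_k \mapsto a_n$ is surjective via the integer embedding $i\colon \zz \hookrightarrow \zz_p$ already recorded in the text: given $b \in \zz/p^n\zz$, lift to $\tilde b \in \zz$ and observe that $i(\tilde b) = (\tilde b \bmod p, \tilde b \bmod p^2, \ldots)$ has $n$-th coordinate $\tilde b \bmod p^n = b$.

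For exactness at the middle $\zz_p$, the inclusion $\im(m_{p^n}) \subseteq \ker \pi_n$ is immediate, since $(p^n c)_n = p^n c_n \equiv 0 \Mod{p^n}$. For the reverse, the cleanest route is to invoke Theorem \ref{thm: 2.1}: the kernel of $\pi_n$ is a proper nonzero ideal of $\zz_p$, hence of the form $p^m \zz_p$ for a unique $m \geq 1$. Noting that $p^n$ lies in $\ker \pi_n$ while $p^{n-1}$ does not (its image is the nonzero class of $p^{n-1}$ in $\zz/p^n\zz$) pins down $m = n$, so $\ker \pi_n = p^n \zz_p = \im(m_{p^n})$.

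The only real obstacle is this middle exactness. The purely direct approach would require a careful but entirely routine index manipulation: given $a = (a_k)_k$ with $a_n = 0$, one would use the compatibility $a_{k+n} \equiv a_n \Mod{p^n}$ to write $a_{k+n} = p^n d_k$ with each $d_k$ well-defined in $\zz/p^k\zz$, verify that $(d_k)_k$ is itself a compatible system (using the compatibility on the $a_k$), and check $p^n (d_k)_k = a$. Routing through Theorem \ref{thm: 2.1} sidesteps this bookkeeping entirely, at the modest cost of appealing to the PID structure of $\zz_p$.
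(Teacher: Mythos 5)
Your proof is correct, but for the one step that carries real content --- exactness at the middle term --- you take a genuinely different route from the paper. The paper argues directly in coordinates: for $a=(a_1,a_2,\ldots)\in\ker\pi_n$ it observes $a_n=0$ and then simply asserts that $a$ lies in $p^n\zz_p$, which is exactly the ``routine index manipulation'' you describe (writing $a_{k+n}=p^nd_k$ and checking that the $d_k$ form a compatible system); the paper does not actually carry this bookkeeping out. You instead invoke Theorem \ref{thm: 2.1}: $\ker\pi_n$ is a nonzero proper ideal, hence equals $p^m\zz_p$, and the observations $p^n\in\ker\pi_n$, $p^{n-1}\notin\ker\pi_n$ force $m=n$. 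This is a clean and complete argument, at the cost of leaning on the classification of ideals of $\zz_p$; the direct approach is more self-contained and works verbatim in the inverse-limit picture without knowing $\zz_p$ is a PID, but requires the compatibility juggling you flagged. Your injectivity argument (multiplication by a nonzero element of an integral domain) is also cleaner than the paper's, whose coordinatewise claim that $(a_1p^n,a_2p^n,\ldots)=(0,0,\ldots)$ forces $a=0$ is not literally valid entry by entry (e.g.\ $a_1p^n\equiv 0\Mod{p}$ always holds) and really needs either the compatibility conditions or the domain property you use. Both your injectivity and surjectivity arguments, and the easy inclusion $\im(m_{p^n})\subseteq\ker\pi_n$, match the paper's in substance.
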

\begin{proof}
	Let $\varphi \colon \zz_p \to \zz_p $ be defined by $\varphi (a) = ap^n$. Then, for $a \neq 0$ in $\zz_p$ with $a = (a_1, a_2, \ldots)$, we have that $\varphi (a)=p^na$ implies $(a_1p^n , a_2p^n) = (0,0,\ldots)$, so $a$ must be zero and thus $\ker \varphi = \{ 0 \}$ and $\varphi $ is injective. Denote $\pi_n \colon \zz_p \to \zz/p ^n \zz$ to be the map where $a= (a_1, a_2, \ldots)$ and $\pi_n (a) = a_n$. This is clearly surjective. Hence it remains to show that $\ker \pi_n = \im \varphi$.  We have that $\im \varphi \subset \ker \pi_n$ as $\pi_n \circ \varphi (a) = \pi_n (p^na) =p^n a_n =0$. For $a = (a_1, a_2, \ldots) \in \ker \pi_n$, we have that $\pi_n (a) = a_n= 0$, meaning that $a_n \in p^n \zz_p$. Hence $\ker \pi _n = \im \varphi$ and our sequence is in fact exact. 
\end{proof}
\begin{coro}
	$\zz_p/p^n \zz_p \simeq \zz/p^n\zz$.
\end{coro}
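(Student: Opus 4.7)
The plan is to obtain this as an immediate consequence of the exactness of the sequence
\[
0 \to \zz_p \xrightarrow{\;\varphi\;} \zz_p \xrightarrow{\;\pi_n\;} \zz/p^n\zz \to 0
\]
established in the preceding proposition, by applying the first isomorphism theorem to $\pi_n$.

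Concretely, I would first observe that $\pi_n$ is a surjective ring homomorphism from $\zz_p$ onto $\zz/p^n\zz$, so that $\zz_p/\ker \pi_n \simeq \zz/p^n\zz$. Next, exactness at the middle term of the sequence yields $\ker \pi_n = \im \varphi$, and since $\varphi$ is multiplication by $p^n$ we have $\im \varphi = p^n\zz_p$. Combining these gives the desired isomorphism $\zz_p/p^n\zz_p \simeq \zz/p^n\zz$.

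There is no real obstacle here: the entire content is packaged in the proposition just proved, and the corollary is just the standard reinterpretation of a short exact sequence $0 \to A \to B \to C \to 0$ as an isomorphism $B/A \simeq C$. The only thing worth emphasizing in writing is that $\im \varphi = p^n\zz_p$, which is literally the definition of $\varphi(a)=p^na$, so that identifying the kernel of $\pi_n$ with $p^n\zz_p$ is transparent.
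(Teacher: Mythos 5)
Your proposal is correct and is exactly the argument the paper intends: the corollary is stated without proof precisely because it follows from the preceding proposition by the first isomorphism theorem applied to $\pi_n$, with $\ker \pi_n = \im \varphi = p^n\zz_p$ given by exactness. No differences to report.
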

\subsection{Irreducibility in the p-adic integer polynomial ring} A polynomial $p(T) \in \zz_p[T]$ is of the form $p(T) = a_{n}T^n + a_{n-1}T^{n-1}  + \cdots + a_1T +a_0$ where $a_i \in \zz_p$ for $0 \leq i \leq n$. Recall that for an integral domain $A$, a polynomial $p(T) \in A[T]$, which is nonzero and not a unit in $A[T]$, is said to \textit{irreducible} whenever $p(T) = f(T)g(T)$, with $f(T),g(T) \in A[T]$, then either $f(T)$ or $g(T)$ is a unit in $A[T]$. (It is useful to remember the basic fact that, for an integral domain $A$, we have $(A[T])^\times = A^\times$.) We should remember the following fact to further our discussion about $\zz_p[T]$, which was used throughout the entire introduction: 
\begin{lemma}
	Let $A[T]$ be a PID. The polynomial $p(T) \in A[T]$ is irreducible if and only if $A[T]/(p(T))$ is a field. 
\end{lemma}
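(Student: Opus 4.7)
The plan is to prove the two standard equivalences for principal ideal domains, namely that in a PID an element is irreducible if and only if the principal ideal it generates is maximal, and then to invoke the familiar correspondence between maximal ideals and field quotients. Since $A[T]$ is assumed to be a PID, every ideal containing $(p(T))$ has the form $(q(T))$ for some $q(T) \in A[T]$, which is the one place the PID hypothesis will really be used.

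For the forward direction, I would assume $p(T)$ is irreducible and show $(p(T))$ is maximal. Given any ideal $I$ with $(p(T)) \subseteq I \subseteq A[T]$, write $I = (q(T))$ by the PID hypothesis, so $p(T) = q(T)r(T)$ for some $r(T) \in A[T]$. Irreducibility forces either $q(T)$ or $r(T)$ to be a unit in $A[T]$; in the first case $I = A[T]$, and in the second $q(T)$ is an associate of $p(T)$ so $I = (p(T))$. Hence $(p(T))$ is maximal and $A[T]/(p(T))$ is a field.

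For the reverse direction, assume $A[T]/(p(T))$ is a field. Then $(p(T))$ is maximal, in particular a nonzero prime ideal, which shows that $p(T)$ is neither zero nor a unit. If $p(T) = f(T)g(T)$, then $f(T)g(T) \in (p(T))$, and primality of the ideal gives (without loss of generality) $f(T) \in (p(T))$, so $f(T) = p(T)h(T)$ for some $h(T)$. Substituting back yields $p(T)\bigl(1 - h(T)g(T)\bigr) = 0$, and since $A[T]$ is a domain (it is a PID) and $p(T) \neq 0$, we conclude $h(T)g(T) = 1$, so $g(T)$ is a unit. Thus $p(T)$ is irreducible.

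There is no real obstacle here; the content is entirely routine PID theory, and the only step that genuinely uses the PID hypothesis is the representation $I = (q(T))$ in the forward direction. The rest follows from the definitions of irreducibility, prime ideal, and maximal ideal, together with the fact that $A[T]$ is an integral domain.
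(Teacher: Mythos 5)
Your proof is correct. The forward direction is essentially the paper's argument: both of you use the PID hypothesis to write an ideal containing $(p(T))$ as $(q(T))$ and then apply irreducibility to the resulting factorization; you are actually a bit more careful, since you treat both cases ($q$ a unit gives $I = A[T]$, $r$ a unit gives $I = (p(T))$) explicitly, whereas the paper assumes $I \neq A[T]$ and only derives a contradiction in the first case. The backward direction is where you genuinely diverge. The paper takes ``reducible'' to mean $p(T) = f(T)g(T)$ with $\deg f < \deg p$ and $\deg g < \deg p$ and contradicts maximality via the chain $(p(T)) \subset (f(T)) \subsetneq A[T]$; this identification of reducibility with a degree drop silently uses that $A$ is a field (which does follow from $A[T]$ being a PID, though the paper never says so), since over a non-field a non-unit constant factor would give a reduction with no degree drop. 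You instead use only that a maximal ideal is prime and that $A[T]$ is a domain: from $p = fg$, primality forces $f \in (p)$, and cancellation of $p$ makes $g$ a unit. This avoids the degree bookkeeping entirely and in fact proves the stronger, more general statement that a generator of a nonzero prime ideal in a domain is irreducible. The one point worth a half-sentence of justification is your parenthetical ``in particular a nonzero prime ideal'': maximal ideals can be zero in general (in a field), so you should note that $(0)$ is not maximal in $A[T]$ because $A[T]$ is never a field ($T$ is not invertible); with that observation, $p(T) \neq 0$ and your cancellation step is secure.
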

\begin{proof}
	Assume that $p(T)$ is irreducible. Let $(p(T))$ be contained in the ideal $I = (q(T))$, and we assume $I \neq A[T]$. Then $ p(T) = q(T)g(T)$ for some $g(T) \in A[T]$, so as $p(T)$ is irreducible, then either $q(T)$ or $g(T)$ is invertible. Assume, without loss of generality, $q(T)$ is invertible. Then there exists $f(T) \in A[T]$ such that $q(T)f(T) = 1$. But this implies that $1 \in I$ so $I = A[T]$, which is a contradiction. Thus $(p(T))$ cannot be contained in any other ideal of $A[T]$, so it is maximal and hence $A[T]/(p(T))$ is a field. For the backward direction, let $A[T]/(p(T))$ be a field. Then $(p(T))$ is a maximal ideal. Assume that $p(T)$ is reducible, i.e. $p(T) = f(T) g(T)$ with $\deg f< \deg p$ and $\deg g < \deg p$. So, without loss of generality, we have $(p(T)) \subset (f(T))$. We cannot have that $(f(T)) = A[T]$, as then this would mean that $f(T)$ is invertible and consequentially mean that $p(T)$ is irreducible. Hence $(p(T))$ is not maximal and a contradiction. Therefore $p(T)$ is irreducible.
\end{proof}
From basic algebra, we learn Eisenstein's criterion and Gauss' Lemma. Both of these theorems provide an easier way to verify whether or not a given polynomial $p(T) \in \qq[T]$ is irreducible. There is a necessary condition prevalent in Gauss' Lemma called \textit{primitive}, which means that all the coefficients of our polynomial are relatively prime. This primitive condition is necessary, and relevant to our definition of irreducibility, as there is a difference between irreducibility in $\zz[T]$ and $\qq[T]$. For example, consider $f(T) = 2T^2 + 2 = 2 (T+1)$; this is irreducible over $\qq[T]$ as $2$ is a unit of $\qq$, but this polynomial is reducible over $\zz[T]$ as $2$ is not a unit of $\zz$. In a similar vein, the element $2$ is irreducible in $\zz[T]$, but not in $\qq[T]$ as $2$ is once again just a unit. Now, as $\zz_p [T]$ and $\qq_p[T]$ are the objects most relevant to us, there are in fact analogs of Eisenstein and Gauss' Lemma for these such objects:
\begin{prop}[Eisenstein, {\cite[Proposition 6.3.11]{Gou}}] \label{prop: 2.2}Let $f(T) \in \zz_p[T]$ with $f(T) =a_nT^n + \cdots + a_1 T +a_0 \in \zz_p[T]$ satisfying 
	\begin{itemize}
		\item [(i)] $|a_n |_p = 1$,
		\item [(ii)] $|a_i|_p < 1$ for $0 \leq i < n$, and
		\item [(iii)] $|a_0|_p = \frac{1}{p}$. 
	\end{itemize}Then $f(T)$ is irreducible over $\qq_p[T]$. 
\end{prop}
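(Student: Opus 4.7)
The plan is to translate the three valuation conditions into the more familiar Eisenstein-style divisibility conditions, reduce from $\qq_p[T]$ to $\zz_p[T]$ via a Gauss lemma argument, and then conclude by reducing mod $p$. Specifically, conditions (i)--(iii) say $v_p(a_n)=0$, $v_p(a_i)\geq 1$ for $0\leq i<n$, and $v_p(a_0)=1$, which is the literal $p$-adic recasting of classical Eisenstein: $p\nmid a_n$, $p\mid a_i$ for $i<n$, and $p^2\nmid a_0$. Since $a_n\in\zz_p^\times$, the polynomial $f$ is primitive.

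The reduction step I would carry out first: because $\zz_p$ is a PID (Theorem \ref{thm: 2.1}), it is a UFD, so the analog of Gauss' lemma that the paper alludes to applies. Thus a primitive polynomial in $\zz_p[T]$ is irreducible over $\qq_p$ if and only if it is irreducible in $\zz_p[T]$. Consequently it suffices to show that $f$ admits no nontrivial factorization $f=gh$ with $g,h\in\zz_p[T]$ of positive degree.

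The main step is a mod-$p$ reduction. Consider the reduction map $\zz_p[T]\twoheadrightarrow \zz_p[T]/p\zz_p[T]\cong \ff_p[T]$ coming from $\zz_p/p\zz_p\cong\ff_p$, and denote reduction by an overbar. Conditions (i) and (ii) give $\overbar{f}=\overbar{a_n}T^n$ with $\overbar{a_n}\in\ff_p^\times$. Since $\ff_p[T]$ is a UFD and $T$ is its only prime dividing $T^n$, a factorization $\overbar{f}=\overbar{g}\,\overbar{h}$ forces $\overbar{g}=b T^r$ and $\overbar{h}=c T^s$ for some $b,c\in\ff_p^\times$ with $r+s=n$ and $r,s\geq 1$ (positivity of the degrees of $g,h$ is preserved because their leading coefficients multiply to the unit $a_n$, so neither can drop). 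In particular the constant terms satisfy $g(0),h(0)\in p\zz_p$, i.e.\ $v_p(g(0))\geq 1$ and $v_p(h(0))\geq 1$, and therefore
\[
v_p(a_0)=v_p\bigl(g(0)h(0)\bigr)=v_p(g(0))+v_p(h(0))\geq 2,
\]
contradicting condition (iii). Hence $f$ is irreducible in $\zz_p[T]$, and by the Gauss lemma step, also in $\qq_p[T]$.

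The only delicate point I expect is the Gauss lemma reduction: one must check that the factor polynomials really can be taken in $\zz_p[T]$ (rather than merely in $\qq_p[T]$) while preserving primitivity and degrees. This is a standard consequence of $\zz_p$ being a UFD with fraction field $\qq_p$, and the paper explicitly flags the existence of this analog, so it can be invoked as a black box. The remaining mod-$p$ argument is then a line-for-line translation of the classical proof, with the $p$-adic valuation $v_p$ playing the role of $\text{ord}_p$.
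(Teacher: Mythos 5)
The paper does not actually prove this proposition: it is stated with a citation to {\cite[Proposition 6.3.11]{Gou}} and no proof is given, so there is no in-paper argument to compare yours against. That said, your proof is correct and complete, and it is the standard Eisenstein argument. The translation of (i)--(iii) into $v_p(a_n)=0$, $v_p(a_i)\geq 1$ for $i<n$, and $v_p(a_0)=1$ is right; primitivity follows from $a_n\in\zz_p^\times$; the paper's stated Gauss' lemma for $\zz_p$ (legitimate, since $\zz_p$ is a PID by Theorem \ref{thm: 2.1}, hence a UFD) reduces irreducibility over $\qq_p[T]$ to the nonexistence of a factorization $f=gh$ with $g,h\in\zz_p[T]$ of positive degree; and the reduction mod $p$ gives $\overbar{f}=\overbar{a_n}T^n$, so unique factorization in $\ff_p[T]$ forces $\overbar{g}$ and $\overbar{h}$ to be unit multiples of powers of $T$, whence $g(0),h(0)\in p\zz_p$ and $v_p(a_0)\geq 2$, contradicting (iii). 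You also correctly patch the one spot where this argument is usually left implicit: the degrees of $g$ and $h$ cannot drop under reduction because their leading coefficients multiply to the unit $a_n$, which is exactly what guarantees $r,s\geq 1$. The only cosmetic point is that irreducibility in $\zz_p[T]$ also requires excluding a factorization with a constant non-unit factor, but primitivity disposes of that in one line, and your appeal to primitivity already covers it.
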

\begin{ex}
The polynomial $p(T) = T^2-3$ is irreducible over $\qq_3$. 	This polynomial is monic, so $a_n = 1$ and thus $|1|_3  = 1$ as $v_p(1) = 0$ in general, and $a_0 =-3$ is the only other term so we find   $v_3(3) = 1$ meaning that $|3|_3 = \frac{1}{3^1} = \frac{1}{3} < 1$. Therefore $p(x)$ is irreducible in $\qq_3$.  
\end{ex}

\begin{theorem}[Gauss' Lemma\footnote{This is a special case of Gauss' lemma; this in general works over a unique factorization domain and its corresponding field of fractions.}] A non-constant polynomial $f(x) \in \zz_p [T]$ is irreducible in $\zz_p [T]$ if and only it is both irreducible in $\qq_p [T]$ and primitive in $\zz_p[T]$. In particular, if $f$ is irreducible in $\zz_p[T]$, then it is irreducible in $\qq_p[T]$.
\end{theorem}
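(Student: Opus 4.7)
The plan is to mimic the classical proof of Gauss' Lemma for a UFD, with the $p$-adic valuation $v_p$ on $\zz_p$ playing the role of the content function. For $f = \sum a_i T^i \in \zz_p[T]$, define the \emph{content} $c(f) = p^{\min_i v_p(a_i)}$ (well-defined up to a unit of $\zz_p$), so that $f$ is primitive precisely when $c(f) \in \zz_p^\times$, equivalently when $|a_i|_p = 1$ for some $i$. Every $f \in \zz_p[T]$ factors as $f = c(f) f^\ast$ with $f^\ast$ primitive, and by Theorem \ref{thm: 2.1} every nonzero $f \in \qq_p[T]$ can be scaled by some $p^N$ into $\zz_p[T]$.

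The core step I would isolate first is the following analogue of the usual Gauss key lemma: \emph{if $g, h \in \zz_p[T]$ are primitive, then so is $gh$}. For this I would reduce modulo the maximal ideal $\mathfrak m_p = p\zz_p$. Using $\zz_p/p\zz_p \simeq \ff_p$ (the corollary just above), reduction induces a ring map $\zz_p[T] \to \ff_p[T]$, $f \mapsto \bar f$, and primitivity of $f$ is exactly the statement $\bar f \neq 0$. If $gh$ failed to be primitive, then $\bar g \bar h = \overline{gh} = 0$ in $\ff_p[T]$, but $\ff_p[T]$ is an integral domain, forcing $\bar g = 0$ or $\bar h = 0$, a contradiction.

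With this key lemma in hand, I would prove the forward direction as follows. Suppose $f$ is irreducible in $\zz_p[T]$. Writing $f = c(f) f^\ast$ with $f^\ast$ primitive of the same positive degree as $f$, irreducibility together with $(\zz_p[T])^\times = \zz_p^\times$ forces $c(f) \in \zz_p^\times$, so $f$ is primitive. Now suppose toward contradiction that $f = gh$ in $\qq_p[T]$ with $\deg g, \deg h \geq 1$. Choose $a, b \in \zz_p \setminus \{0\}$ so that $ag, bh \in \zz_p[T]$, write each in the form $ag = c(ag)\, g_0$, $bh = c(bh)\, h_0$ with $g_0, h_0$ primitive, and obtain
\[
ab\, f \;=\; c(ag)\, c(bh)\, g_0 h_0 .
\]
By the key lemma $g_0 h_0$ is primitive, so comparing contents on both sides (using that $f$ is primitive) gives $ab = u\, c(ag)\, c(bh)$ for a unit $u \in \zz_p^\times$. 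Dividing yields $f = u\, g_0 h_0$ with $\deg g_0, \deg h_0 \geq 1$, a nontrivial factorization in $\zz_p[T]$, contradicting irreducibility in $\zz_p[T]$. The ``in particular'' clause is immediate from this paragraph.

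For the backward direction, suppose $f$ is primitive in $\zz_p[T]$ and irreducible in $\qq_p[T]$, and write $f = gh$ with $g, h \in \zz_p[T]$. Irreducibility in $\qq_p[T]$, combined with $(\qq_p[T])^\times = \qq_p^\times$, forces one factor — say $g$ — to be a constant in $\qq_p$; since $g \in \zz_p[T]$, in fact $g \in \zz_p$. Then $c(f) = g\cdot c(h)$ up to a unit, and primitivity of $f$ forces $g \in \zz_p^\times$, i.e.\ $g \in (\zz_p[T])^\times$. Hence $f$ is irreducible in $\zz_p[T]$. The main obstacle in carrying out this plan is the clearing-denominators step in the forward direction, where one must carefully track contents to convert a $\qq_p$-factorization into a genuine $\zz_p$-factorization; the reduction mod $p$ trick for the key lemma is clean, but combining it with the content comparison is where the bookkeeping requires care.
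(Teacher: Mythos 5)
Your proof is correct and complete. The paper itself offers no proof of this theorem --- it is stated bare, with a footnote deferring to the general Gauss' Lemma over a UFD and its fraction field --- so there is nothing internal to compare against; your argument is precisely the standard one specialized to the discrete valuation ring $\zz_p$. The two ingredients you isolate are both sound: the key lemma (product of primitives is primitive) via reduction modulo $\mathfrak m_p = p\zz_p$ into the domain $\ff_p[T]$, where primitivity becomes nonvanishing of $\bar f$; and the content bookkeeping in the denominator-clearing step, which works because $c(\cdot)$ is multiplicative up to units once the key lemma is in hand and because $(\zz_p[T])^\times = \zz_p^\times$. One could streamline slightly by noting that in a DVR the content is just $p^{\min_i v_p(a_i)}$, so ``comparing contents'' reduces to comparing minimal valuations of coefficients, but your version is already airtight.
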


It is quite remarkable the properties that we've seen come out of $\zz_p$ and $\qq_p$, but what's even more remarkable is that a lot of what we've said generalizes! We'll be vague about this \textit{generalization} as it requires a lot of space to explain, but it surrounds our use of a discrete valuation. What we want to talk about next is \textit{Hensel's lemma}, which loosely speaking says that we can lift a root from $\zz/p\zz$ to a root in $\zz_p$ given some suitable conditions. This generalizes as well! Now, there are "weaker" and "stronger" versions of Hensel's lemma. We will state the stronger without proof (see {\cite[$\S$4, p. 5]{Con}}) and get, as a corollary, the more well-known "weaker" version of Hensel's lemma:

\begin{theorem}[{\cite[Theorem 4.1.]{Con}}]
	Let $f(T) \in \zz_p[T]$ and $a \in \zz_p$ such that 
	\[
	|f(a)|_p < |f^\pp (a)|^2_p.
	\] There is a unique $\alpha \in \zz_p$ such that $f(\alpha) = 0$ in $\zz_p$ and $|\alpha -a|_p < |f^\pp (a)|_p$. Moreover,
	\begin{itemize}
		\item [(1)] $|\alpha-a|_p=|f(a)/f^\pp (a)|_p < |f^\pp (a)|_p$,
		\item [(2)] $|f^\pp (\alpha)|_p = |f^\pp (a)|_p$.
	\end{itemize} 
 \label{Hensel}
\end{theorem}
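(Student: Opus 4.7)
The plan is to run a $p$-adic Newton iteration. Set $a_0 = a$ and $a_{n+1} = a_n - f(a_n)/f^\pp(a_n)$. I would (i) verify that the iteration stays in $\zz_p$, (ii) show the sequence is Cauchy and hence converges by completeness of $\zz_p$, and (iii) derive the stated properties of the limit together with its uniqueness. The essential algebraic input is the polynomial Taylor identity: there exist $G, H \in \zz_p[X, Y]$ with
\[
f(x + h) = f(x) + f^\pp(x)\, h + h^2 G(x, h), \qquad f^\pp(x + h) = f^\pp(x) + h\, H(x, h),
\]
so for $x, h \in \zz_p$ we have $|G(x,h)|_p, |H(x,h)|_p \leq 1$, which is what makes the quadratic correction negligible in the ultrametric.

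Let $c = |f(a)|_p/|f^\pp(a)|_p^2$; the hypothesis forces $c < 1$. The core of the argument is the inductive claim
\[
|f^\pp(a_n)|_p = |f^\pp(a)|_p \quad \text{and} \quad |f(a_n)|_p \leq c^{2^n}\, |f^\pp(a)|_p^2 .
\]
Granting this at stage $n$, the Newton step $h_n := -f(a_n)/f^\pp(a_n)$ satisfies $|h_n|_p \leq c^{2^n} |f^\pp(a)|_p < |f^\pp(a_n)|_p$. Plugging into the Taylor expansions, the linear term cancels by construction, leaving $f(a_{n+1}) = h_n^2\, G(a_n, h_n)$, so $|f(a_{n+1})|_p \leq |h_n|_p^2 \leq c^{2^{n+1}} |f^\pp(a)|_p^2$. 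For the derivative, $|h_n H(a_n, h_n)|_p < |f^\pp(a_n)|_p$, so the ultrametric equality yields $|f^\pp(a_{n+1})|_p = |f^\pp(a_n)|_p$. This closes the induction and in particular shows the iteration is well defined in $\zz_p$.

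Since $|a_{n+1} - a_n|_p = |h_n|_p \to 0$ doubly exponentially, $(a_n)$ is Cauchy and converges in $\zz_p$ to some $\alpha$; continuity of $f$ and $f^\pp$ gives $f(\alpha) = 0$ and $|f^\pp(\alpha)|_p = |f^\pp(a)|_p$, which is (2). For (1), telescoping $\alpha - a = \sum_{n \geq 0} h_n$ together with the observation $|h_n|_p \leq c^{2^n - 1} |h_0|_p < |h_0|_p$ for $n \geq 1$ lets the ultrametric collapse the sum to $|\alpha - a|_p = |h_0|_p = |f(a)/f^\pp(a)|_p$, which is $< |f^\pp(a)|_p$ exactly by the hypothesis. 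For uniqueness, if $\beta \in \zz_p$ also satisfies $f(\beta) = 0$ and $|\beta - a|_p < |f^\pp(a)|_p$, then $|\beta - \alpha|_p < |f^\pp(\alpha)|_p$; expanding $f(\beta)$ via the Taylor identity around $\alpha$ and dividing through by $\beta - \alpha$ (assumed nonzero) forces $|f^\pp(\alpha)|_p \leq |\beta - \alpha|_p$, a contradiction. The main obstacle is not any single manipulation but the bookkeeping of the strict inequality $|h_n|_p < |f^\pp(a_n)|_p$ throughout the induction, as it is precisely this gap that lets the nonarchimedean triangle inequality collapse every relevant sum to its dominant term, both when freezing $|f^\pp(a_n)|_p$ and when identifying the limiting distance $|\alpha - a|_p$.
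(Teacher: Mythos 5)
Your Newton-iteration argument is correct: the double-exponential decay $|f(a_n)|_p \leq c^{2^n}|f^\pp(a)|_p^2$, the ultrametric freezing of $|f^\pp(a_n)|_p$, the collapse of the telescoping sum to $|h_0|_p$, and the uniqueness step via the Taylor identity all check out (including the degenerate case $f(a)=0$). Note that the paper itself gives no proof of this theorem --- it is stated with an explicit deferral to Conrad's notes --- and your argument is essentially the standard one carried out in that cited reference, so there is no divergence of method to report.
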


\begin{coro}[Hensel's lemma] If $f(T) \in \zz_p[T]$ and $a\in \zz_p$ satisfies
\[
f(a) \equiv 0 \Mod {p}, \; f^\pp (a) \not \equiv 0 \Mod{p},
\] then there exists a unique $\alpha \in \zz_p$ such that $f(\alpha) = 0$ in $\zz_p$ and $\alpha \equiv a \Mod{p}$.
\end{coro}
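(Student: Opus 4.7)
The plan is to reduce the corollary directly to Theorem~\ref{Hensel} by translating the two congruence hypotheses into the single absolute value inequality $|f(a)|_p < |f^\pp(a)|_p^2$ that the stronger theorem requires.

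First I would rewrite the hypotheses using the $p$-adic valuation developed in Section~\ref{sec: 2}. The condition $f(a) \equiv 0 \Mod p$ says $p \mid f(a)$ in $\zz_p$, so $v_p(f(a)) \geq 1$ and hence $|f(a)|_p \leq 1/p$. The condition $f^\pp(a) \not\equiv 0 \Mod p$ says $f^\pp(a)$ is a unit of $\zz_p$, so by Lemma~\ref{lem: 2.1}(ii) we have $|f^\pp(a)|_p = 1$, and in particular $|f^\pp(a)|_p^2 = 1$. Combining these gives
\[
|f(a)|_p \,\leq\, \frac{1}{p} \,<\, 1 \,=\, |f^\pp(a)|_p^2,
\]
which is exactly the hypothesis of Theorem~\ref{Hensel}.

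Next I would invoke Theorem~\ref{Hensel} to obtain a unique $\alpha \in \zz_p$ with $f(\alpha) = 0$ and $|\alpha - a|_p < |f^\pp(a)|_p = 1$. Since $|\alpha - a|_p < 1$ is equivalent to $v_p(\alpha - a) \geq 1$, i.e.\ to $p \mid (\alpha - a)$ in $\zz_p$, this last inequality is precisely the congruence $\alpha \equiv a \Mod p$. The uniqueness clause of Theorem~\ref{Hensel} is uniqueness among those $\beta \in \zz_p$ with $|\beta - a|_p < |f^\pp(a)|_p$, which by the same dictionary is exactly the set $\{\beta \in \zz_p : \beta \equiv a \Mod p\}$, so the uniqueness claim of the corollary drops out as well.

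There is no real obstacle here: the only content is the translation between the additive language of congruences modulo $p$ and the multiplicative language of the $p$-adic absolute value. Once this correspondence $v_p \leftrightarrow |\cdot|_p \leftrightarrow \Mod p$ is applied to each hypothesis and conclusion, the corollary is a direct specialization of Theorem~\ref{Hensel} to the case where $f^\pp(a)$ happens to be a unit.
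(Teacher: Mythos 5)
Your proof is correct and takes essentially the same approach as the paper: both reduce the corollary to Theorem~\ref{Hensel} by specializing to the case $|f^\pp(a)|_p = 1$ and translating between congruences modulo $p$ and the $p$-adic absolute value. If anything, your write-up runs the implications in the cleaner logical direction (from the corollary's congruence hypotheses to the theorem's inequality $|f(a)|_p < |f^\pp(a)|_p^2$), whereas the paper's proof derives the congruences from the absolute-value conditions.
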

\begin{proof} We have $f^\pp (a) \in \zz_p$ as we're evaluating on a polynomial $f (T) \in \zz_p[T]$ with $a \in \zz_p$, so $|f^\pp (a)|_p \leq 1$ and $|f^\pp (a)|_p^2 \leq 1$. For the special case of $|f^\pp (a)|_p = 1$, we have by Theorem \ref{Hensel} $|f(a)|_p < |f^\pp (a)|_p^2 = 1$. In turn, $|f(a)|_p < 1$ which means that $v_p (f (a)) > 0$, i.e. $p$ divides $f(a)$ at least once. Thus $f (a) \equiv 0 \Mod {p}$. Similarly, as $|f^\pp (a)|_p = 1$ then $v_p (f^\pp (a)) =0$, so $p \nmid  f^\pp (a) $, and hence $f^\pp (a) \not \equiv 0 \Mod {p}$. As $|\alpha - a|_p<|f^\pp (a)|_p =1$, then we get, using the almost exact same argument applied to $f(a)$, that $\alpha \equiv a \Mod{p}$.
\end{proof}
\begin{ex}
Consider the equation $f(T) = T^2-13 $ in $\zz_3$ and take $a= 1 \in \zz_3$. Then $f(1) = 1^2 -13 =-12 \equiv 0 \Mod{3}$ and $f^\pp (T) = 2T$ so $f^\pp (1) = 2 \not \equiv 0 \Mod{3}$. By Hensel's lemma, there exists $\alpha \in \zz_p$ such that $f(\alpha) = \alpha^2 - 13 =0$, i.e. $\alpha^2 = 13$, and $ 	\alpha \equiv 1 \Mod{3}$. This is somewhat strange: we're saying that in $\zz_3$, there exists a number $\alpha \in \zz_3$ so that $13$ is square! This is starkly different from $\zz$ as the equation $f(T)= T^2-13$ is far from having a root, and even its fraction field $\qq$ doesn't possess a root. 
\end{ex}

\section{The prime spectrum of the p-adic integer polynomial ring}
\label{sec: 3}
\subsection{An algebraic geometry technique}It is not too hard to establish what the points of $\spec \zz_p[T]$ are: we will see that $\spec \zz_p[T]$ ends up looking like $\spec \qq_p[T]$ and $\spec \zz/p\zz[T]$. To establish what $\spec \zz_p[T]$ is, we employ a useful tool from algebraic geometry. For the injection $\varphi \colon \zz_p \to \zz_p[T]$, we can get a map on spectra, $\pi \colon  \spec \zz_p [T] \to \spec \zz_p $ where $\mathfrak p \mapsto \varphi ^{-1}(\mathfrak p)$. To inspect the prime ideals of $\zz_p[T]$ amounts to looking at \textit{fibres} of $\pi$. Notice that $\zz_p$  has only two prime ideals, namely $(0)$ and $(p)$ by Theorem \ref{thm: 2.1}, so we only need to look at two fibres of $\pi$. For the map $\pi \colon \spec \zz_p[T] \to \spec \zz_p$ and a point $\mathfrak q \in \spec \zz_p$, \[(\spec \zz_p[T])_\mathfrak q = \spec \zz_p[T] \times _{\spec \zz_p} \spec \kappa (\mathfrak q), \]
where the canonical map $\spec \kappa (\mathfrak q) \to \spec \zz_p$ is given by composition $ \spec \kappa (\mathfrak q ) \to \spec ( \mathscr O_{\spec \zz_p,\mathfrak q } )\to  \spec \zz_p$. The reason this setup is useful is that we can apply the following lemma:
\begin{lemma} Let $f \colon X \to Y$ be a morphism of schemes and let $y \in Y$. Then $X_y = X \times_Y \spec \kappa (y)$ is homeomorphic to $f^{-1}(y)$ with the induced topology. 	
\end{lemma}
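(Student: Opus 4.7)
The plan is to reduce to a purely affine, ring-theoretic computation and then verify that the resulting set-theoretic bijection respects the topologies. First I would observe that both $X_y$ and $f^{-1}(y)$ are local on $Y$ near $y$: picking an affine open $\spec B \subseteq Y$ containing $y$, the map $\spec \kappa(y) \to Y$ factors through $\spec B$, so $X \times_Y \spec \kappa(y) = f^{-1}(\spec B) \times_{\spec B} \spec \kappa(y)$, and likewise $f^{-1}(y) \subseteq f^{-1}(\spec B)$. Hence I can assume $Y = \spec B$ is affine. Then, using that fiber products commute with open immersions on $X$, I would cover $X$ by affine opens $\spec A_i$ and prove the statement for each $\spec A_i \to \spec B$ separately, gluing at the end.

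In the affine case with $X = \spec A$, $Y = \spec B$, and $y$ corresponding to a prime $\mathfrak{q} \subset B$, the residue field is $\kappa(\mathfrak{q}) = B_\mathfrak{q}/\mathfrak{q}B_\mathfrak{q}$, so
\[
A \otimes_B \kappa(\mathfrak{q}) \;\cong\; (S^{-1}A)/\mathfrak{q}(S^{-1}A),
\]
where $S = B \setminus \mathfrak{q}$. By the standard bijections for primes under localization and quotient, the primes of this ring correspond to primes $\mathfrak{p} \subset A$ with $\mathfrak{p} \cap S = \emptyset$ and $\mathfrak{p} \supseteq \mathfrak{q}A$, i.e., $\varphi^{-1}(\mathfrak{p}) = \mathfrak{q}$ for $\varphi \colon B \to A$, which is exactly $f^{-1}(y)$ as a set. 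To promote this to a homeomorphism, I would trace closed sets through the composite $\spec(A \otimes_B \kappa(\mathfrak{q})) \to \spec(S^{-1}A) \to \spec A$: a vanishing locus $V(\bar{J})$ upstairs pushes down to $V(J) \cap f^{-1}(y)$, and conversely every closed set in the subspace topology on $f^{-1}(y)$ arises this way, since the intermediate maps are inclusions of subspaces (localization and quotient both induce homeomorphisms onto their images in $\spec A$).

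Finally, to return to the general case I would glue the local homeomorphisms: on overlaps $\spec A_i \cap \spec A_j$, the universal property of the fiber product ensures that the two descriptions of $X_y$ agree, and the matching preimages $f^{-1}(y) \cap \spec A_i$ assemble into $f^{-1}(y)$ with the subspace topology. The heart of the argument is the affine reduction together with the localization-then-quotient computation above; the main obstacle I expect is not any single step but the bookkeeping required to confirm that the subspace topology on $f^{-1}(y)$ inherited from $X$ coincides with the Zariski topology on $X_y$ after all reductions, which ultimately comes down to the explicit identification of closed sets recorded in the affine step.
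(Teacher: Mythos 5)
The paper states this lemma without proof, treating it as a standard fact from algebraic geometry, so there is nothing in the source to compare against; your argument is correct and is the standard one (essentially Hartshorne, Exercise II.3.10, or the Stacks Project's treatment of fibres). The affine reduction, the identification $A \otimes_B \kappa(\mathfrak q) \cong S^{-1}A/\mathfrak q S^{-1}A$, and the key observation that both localization and quotient induce homeomorphisms of spectra onto their images in $\spec A$ (so the composite lands homeomorphically onto $f^{-1}(y)$ with its subspace topology) are exactly the right ingredients. One cosmetic point: for $X=\spec A$, $Y=\spec B$ with structure map $\varphi\colon B\to A$, the multiplicative set should be the image $\varphi(B\setminus\mathfrak q)\subseteq A$ rather than $B\setminus\mathfrak q$ itself, and the condition ``$\mathfrak p\cap S=\emptyset$'' then reads $\varphi^{-1}(\mathfrak p)\subseteq\mathfrak q$, which together with $\mathfrak p\supseteq\varphi(\mathfrak q)A$ gives $\varphi^{-1}(\mathfrak p)=\mathfrak q$ as you state.
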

To find out $\spec \zz_p[T]$ it amounts to checking $\spec \kappa ((0)) \times _{\zz_p} \spec \zz[T] = \spec (\qq_p \otimes _{\zz_p} \zz[T]) = \spec \qq_p [T]$ and $ \spec \kappa ((p)) \times_{\zz_p} \spec \zz [T] = \spec (\zz/p \zz \otimes_{\zz_p} \zz[T]) = \spec \zz / p \zz[T]$. This follows from the fact that $\mathscr O_{\zz_p, (0)} \simeq (\zz_p)_{(0)} = \qq_p$ and the unique maximal ideal $\mathfrak m  = (0) \zz_p = (0), $ so $\kappa ((0)) =\qq_p /\mathfrak m = \qq_p$, and $\kappa ((p)) = \zz_p/p\zz_p =\zz/p\zz $ follows similarly.  Hence the prime ideals of $ \spec \zz_p [T]$ are in bijection with the prime ideals of $\zz/p \zz[T] \simeq \ff _p [T]$ and $\qq_p [T]$. Thus it suffices to inspect $\spec \zz/p \zz [T]$ and $\spec \qq_p [T]$.\begin{lemma}
$\spec \zz_p[T] = \spec \qq_p[T] \sqcup \coprod\limits_p \spec  \ff_p[T]$; 
that is, $\spec \zz_p[T]$ consists of:
	\begin{itemize}
	\item [(i)] $(0)$, 
	\item [(ii)] $(p)$ for $p$ prime,
	\item [(iii)] $(f(T))$ with $f(T)$-irreducible in $\zz_p[T]$, and
	\item [(iii)] $(p, f(T))$ where $p$ is prime and $f(T) \in \ff_p[T]$ irreducible.
\end{itemize} 
\end{lemma}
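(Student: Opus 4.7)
The previous paragraph already reduces the lemma to classifying the points of the two fibres $\spec\qq_p[T]$ and $\spec\ff_p[T]$, and then tracing the bijection $\spec\zz_p[T] \leftrightarrow \spec\qq_p[T] \sqcup \spec\ff_p[T]$ back to honest prime ideals of $\zz_p[T]$. So the plan has three parts: classify primes of $\qq_p[T]$, classify primes of $\ff_p[T]$, and explicitly identify the preimage of each under $\pi$.

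First, since $\qq_p$ is a field, $\qq_p[T]$ is a PID, so its prime ideals are $(0)$ and $(g(T))$ for $g$ irreducible in $\qq_p[T]$. The fibre of $\pi$ over $(0)\in\spec\zz_p$ consists of primes $\mathfrak p\subset\zz_p[T]$ with $\mathfrak p\cap\zz_p=(0)$, i.e.\ primes not containing $p$; these correspond bijectively to primes of the localization $\zz_p[T][1/p]=\qq_p[T]$ via contraction. The ideal $(0)\subset\qq_p[T]$ pulls back to $(0)\subset\zz_p[T]$, giving item (i). For $(g(T))\subset\qq_p[T]$ with $g$ irreducible, I clear denominators to obtain a primitive $f(T)\in\zz_p[T]$ that is associate to $g$ in $\qq_p[T]$; by Gauss' Lemma, $f$ is irreducible in $\zz_p[T]$, and $(f(T))\subset\zz_p[T]$ is a prime of $\zz_p[T]$ not containing $p$ that contracts to $(g)$. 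This yields item (iii).

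Second, $\ff_p[T]$ is again a PID, so its primes are $(0)$ and $(\overline f(T))$ for $\overline f\in\ff_p[T]$ irreducible. The fibre of $\pi$ over $(p)\in\spec\zz_p$ consists of primes of $\zz_p[T]$ containing $p$; by the correspondence theorem applied to the surjection $\zz_p[T]\twoheadrightarrow\zz_p[T]/(p)\simeq\ff_p[T]$, these are in bijection with primes of $\ff_p[T]$. The zero ideal of $\ff_p[T]$ lifts to $(p)\subset\zz_p[T]$, giving item (ii); an irreducible $\overline f\in\ff_p[T]$ lifts to $(p,f(T))$ for any $f\in\zz_p[T]$ reducing to $\overline f$, giving item (iv).

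Combining the two fibres exhausts $\spec\zz_p[T]$ by the fibre lemma stated just before, and the four families above are mutually disjoint (distinguished by whether $p\in\mathfrak p$ and by the residue field of $\zz_p[T]/\mathfrak p$), which proves the lemma. The step requiring the most care is item (iii): one must check that the bijection between irreducibles in $\qq_p[T]$ (up to associates) and primitive irreducibles in $\zz_p[T]$ (up to $\zz_p^\times$-associates) really does send $(g)$ to $(f)$ with $(f)=\pi((g))$; this is exactly the content of Gauss' Lemma as stated in the previous subsection, and without it one could worry that an irreducible $g\in\qq_p[T]$ might pull back to a non-principal prime of $\zz_p[T]$.
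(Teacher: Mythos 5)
Your proof is correct and follows essentially the same route as the paper, which states the lemma as a direct consequence of the preceding fibre computation $\spec \zz_p[T] \to \spec \zz_p$ and gives no further argument. You have simply supplied the details the paper leaves implicit — the localization $\zz_p[T][1/p] = \qq_p[T]$ for the generic fibre, the quotient correspondence for the special fibre, and the Gauss' Lemma step guaranteeing that an irreducible of $\qq_p[T]$ contracts to a principal prime of $\zz_p[T]$ — all of which are consistent with the paper's setup.
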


Once again, finding some prime/maximal ideals of $\aff_{\zz/p\zz}$ and $\aff_{\qq_p}$ are not hard--in fact, by Proposition \ref{prop: 2.2} we have found that $p(x) = x^2\pm p$ will generate a maximal ideal $(p(x))$ for $\qq_p[T]$ for every $p$. Even easier are the maximal ideals of $\aff_{\zz/p\zz}$ for a fixed $p$: We have $ (p), (p,x), (p,x+1), \ldots, (p,x+(p-1))$ being the maximal ideals landing on $\aff_{\zz/p\zz}$, but these are of course not all.

\subsection{Drawing of \texorpdfstring{$\spec \zz_p[T],p \not \equiv 1 \Mod{4}$}{the prime spectrum} }
\label{subsec: 1}
One should see the figure of $\spec \zz_p[T]$ (see Figure \ref{fig: 2.1}) with $p \not \equiv 1 \Mod{4}$ as separate cases of each $p \not \equiv 1 \Mod{4}$ (see Figure \ref{fig: 3} and Figure \ref{fig: 3.1}) as being stacked on top of each other and the whole global case influencing the choices of $p$. For a choice of $p$ we get the following, each considered detached from the other: 
 
\begin{figure*}[htbp]
\hspace*{-.8cm} 
\begin{tikzpicture}[font=\tiny]

\draw[thick] (7.1,0) ++(9,-.8) node{$\aff_{\qq_p}^1$};
\draw[thick] (2,0) ++(2,-.8) node{$\aff_{\ff_2}^1$};
\draw[thick] (4,0) ++(2,-.8) node{$\aff_{\ff_3}^1$};
\draw[thick] (6,0) ++(2,-.8) node{$\aff_{\ff_7}^1$};
\draw[thick] (8,0) ++(2,-.8) node{$\aff_{\ff_{11}}^1$};

\draw[thick] (3.5,0) -+ (16.5,0);
       \path (4,0) node[dot, label={below:{$(2,T)$}}];
       \path (16,0) node[fuzzy, label={below right:{$(T)$}}];
 		\draw[thick] (4,0) --++(0,5)
		node[fuzzy, label={above left:{$(2)$}}];     
	 
 \draw[thick] (3.5,0) -+ (16.5,0);
       \path (6,0) node[dot, label={below:{$(3,T)$}}];
       \draw[thick] (6,0) --++(0,5)
		node[fuzzy, label={above left:{$(3)$}}];
		\path (6,1.3) node [dot, label={below:{$\;\;\;\;\;\;\;\;\;\;\;\;\;\;\;\;\;\;\;(3,T+1)$}}];
		\path (6,3.5) node [dot, label={below:{$\;\;\;\;\;\;\;\;\;\;\;\;\;\;\;\;\;\;\;(3,T+2)$}}];

 \draw[thick] (3.5,0) -+ (16.5,0);
       \path (8,0) node[dot, label={below:{$(7,T)$}}];
       \path (8,.5) node [dot, label={below:{$\;\;\;\;\;\;\;\;\;\;\;\;\;\;\;\;\;\;\;(7,T+1)$}}];
       \path (8,1.2) node [dot, label={below:{$\;\;\;\;\;\;\;\;\;\;\;\;\;\;\;\;\;\;\; (7,T+2)$}}];
       \path (8,2.5) node [dot, label={below:{$\;\;\;\;\;\;\;\;\;\;\;\;\;\;\;\;\;\;\; (7,T+3)$}}];
       \path (8,3.3) node [dot, label={below:{$\;\;\;\;\;\;\;\;\;\;\;\;\;\;\;\;\;\;\; (7,T+4)$}}];
       \path (8,3.9) node [dot, label={below:{$\;\;\;\;\;\;\;\;\;\;\;\;\;\;\;\;\;\;\; (7,T+5)$}}];
       \path (8,4.45) node [dot, label={below:{$\;\;\;\;\;\;\;\;\;\;\;\;\;\;\;\;\;\;\; (7,T+6)$}}];
       \draw[thick] (8,0) --++(0,5)
		node[fuzzy, label={above left:{$(7)$}}]; 
\draw[thick] (11.5,0) --++(0,5);
\draw[thick] (12,0) --++(0,5);
\draw[thick] (12.3,0) --++(0,5);
\draw[thick] (12.5,0) --++(0,5);
		
 \draw[thick] (3.5,0) -+ (16.5,0);
       \path (10,0) node[dot, label={below:{$(11,T)$}}];
       \draw[thick] (10,0) --++(0,5)
		node[fuzzy, label={above left:{$(11)$}}]; 
		
\path (11.5,0) node[dot];        
\path (12,0) node[dot];
\path (12.3,0) node[dot];
\path (12.5,0) node[dot];

\draw[thick] (16,0) -- ++(0,5) 
        node[fuzzy, inner sep=9pt, label={above right:{$(0)$}}];
  \draw[thick] 
		(4,1.75) node[dot] 
		to[out=270, in=270]
        (6,1.75)
        to[out=0, in=180] (7,1.375) to[out=0, in=180]
        (8, 1.75)
        to[out=0, in=180] (9,1.375) to[out=0, in=180]
        (10, 1.75)
        to[out=0, in=180] (10.75,1.375) to[out=0, in=180]
        (11.5,1.75)
        to[out=0, in=180] (11.75,1.375) to[out=0, in=180]
        (12,1.75) node[big dot]
        to[out=0, in=180] (12.15,1.375) to[out=0, in=180]
        (12.3,1.75) node[big dot]
        to[out=0, in=180] (12.4,1.375) to[out=0, in=180]
        (12.5, 1.75) node[big dot]
        to[out=0, in=180]
        (16,1.75);
   
   \draw[thick]
        (4,1.75) node[dot, pin={190:{$(2, T+1)$}}]
        to[out=90, in=90]
        (6,1.75) node[big dot]
        to[out=0, in=180] (7,2.125) to[out=0, in=180]
        (8, 1.75) node[big dot]
        to[out=0, in=180] (9,2.125) to[out=0, in=180]
        (10, 1.75) node[big dot]
        to[out=0, in=180] (10.75,2.125) to[out=0, in=180]
        (11.5,1.75) node[big dot]
        to[out=0, in=180] (11.75,2.125) to[out=0, in=180]
        (12,1.75) node[big dot]
        to[out=0, in=180] (12.15,2.125) to[out=0, in=180]
        (12.3, 1.75) node[big dot]
        to[out=0, in=180] (12.4,2.125) to[out=0, in=180]
        (12.5, 1.75) node[big dot]
        to[out=0, in=0]
        (13, 1.75) 
        to[out=0, in=0]
        (16,1.75)
        node[fuzzy, pin={190:{$(T^2+1)$}}];
        
\draw[thick]
        (4,2.83) node[dot]
        to[out=0, in=0]
        (6, 2.83) node[dot]
        to[out=0, in=0]
        (8, 2.83) node[dot]
        to[out=0, in=0]
        (10, 2.83) node[dot]
        to[out=0, in=0]
        (11.5,2.83) node[dot]
        to[out=0, in=0]
        (12,2.83) node[dot]
        to[out=0, in=0]
        (12.3, 2.83) node[dot]
        to[out=0, in=0]
        (12.5, 2.83) node[dot]
        to[out=0, in=0]
        (13, 2.83) 
        to[out=0, in=260]
        (16,3.8)
        node[fuzzy, pin={350:{$(T^2+p)$}}];

\end{tikzpicture}
\caption{$\spec \zz_p[T]$ with $p \not \equiv 1 \Mod{4}$}
\label{fig: 2.1}
\end{figure*}

\begin{figure*}[htbp] 
\begin{tikzpicture}[font=\tiny]
\draw[thick] (4,0) ++(9,-.8) node{$\aff_{\qq_2}^1$};
\draw[thick] (4,0) ++(2,-.8) node{$\aff_{\ff_2}^1$};

 \draw[thick] (5,0) -+ (14,0);
       \path (6,0) node[dot, label={below:{$(2,T)$}}];
       \path (13,0) node[fuzzy, label={below right:{$(T)$}}];
        
\draw[thick] (6,0) --++(0,4)
node[fuzzy, label={above left:{$(2)$}}];

   \draw[thick] (13,0) -- ++(0,4) 
        node[fuzzy, inner sep=9pt, label={above right:{$(0)$}}];
  \draw[thick] 
		(6,1.2) node[dot] 
        to[out=60, in=200]
        (13,1.8);
   \draw[thick]
        (6,1.2) node[dot, pin={190:{$(2, T+1)$}}]
        to[out=40, in=200]
        (13,1.8)
        node[fuzzy, pin={220:{$(T^2+1)$}}];
   \draw[thick]
        (6, 2.83) node[dot]
        to[out=0, in=0]
        (13, 2.83)
        node[fuzzy, pin={350:{$(T^2+2)$}}];

\end{tikzpicture} 
\caption{$\spec \zz_2[T]$}
\label{fig: 3.1}
\end{figure*}
\begin{figure*}[htbp]
\begin{tikzpicture}[font=\tiny]
\draw[thick] (4,0) ++(9,-.8) node{$\aff_{\qq_p}^1$};
\draw[thick] (4,0) ++(2,-.8) node{$\aff_{\ff_p}^1$};

 \draw[thick] (5,0) -+ (14,0);
       \path (6,0) node[dot, label={below:{$(p,T)$}}];
       \path (13,0) node[fuzzy, label={below right:{$(T)$}}];
        
\draw[thick] (6,0) --++(0,4)
node[fuzzy, label={above left:{$(p)$}}];

   \draw[thick] (13,0) -- ++(0,4) 
        node[fuzzy, inner sep=9pt, label={above right:{$(0)$}}];
  \draw[thick] 
		(6,1.2) node[dot] 
        to[out=60, in=200]
        (13,1.8);
   \draw[thick]
        (6,1.2) node[big dot, pin={190:{$(p, T+1)$}}]
        to[out=2, in=250]
        (13,1.8)
        node[fuzzy, pin={350:{$(T^2+1)$}}];
   \draw[thick]
        (6, 2.83) node[dot]
        to[out=0, in=0]
        (13, 2.83)
        node[fuzzy, pin={350:{$(T^2+p)$}}];

\end{tikzpicture}
\caption{$\spec \zz_p[T]$ for $p>2$ and $p \not \equiv 1 \Mod{4}$}
\label{fig: 3}
\end{figure*}
\begin{figure*}[htbp]
\hspace*{-.8cm}  
\begin{tikzpicture}[font=\tiny]

\draw[thick] (7.1,0) ++(9,-.8) node{$\aff_{\qq_p}^1$};
\draw[thick] (2,0) ++(2,-.8) node{$\aff_{\ff_2}^1$};
\draw[thick] (4,0) ++(2,-.8) node{$\aff_{\ff_3}^1$};
\draw[thick] (6,0) ++(2,-.8) node{$\aff_{\ff_7}^1$};
\draw[thick] (8,0) ++(2,-.8) node{$\aff_{\ff_{11}}^1$};

\draw[thick] (3.5,0) -+ (16.5,0);
       \path (4,0) node[dot, label={below:{$(2,T)$}}];
       \path (16,0) node[fuzzy, label={below right:{$(T)$}}];
 		\draw[thick] (4,0) --++(0,5)
		node[fuzzy, label={above left:{$(2)$}}];     
	 
 \draw[thick] (3.5,0) -+ (16.5,0);
       \path (6,0) node[dot, label={below:{$(3,T)$}}];
       \draw[thick] (6,0) --++(0,5)
		node[fuzzy, label={above left:{$(3)$}}];
		\path (6,1.3) node [dot, label={below:{$\;\;\;\;\;\;\;\;\;\;\;\;\;\;\;\;\;\;\;(3,T+1)$}}];
		\path (6,3.5) node [dot, label={below:{$\;\;\;\;\;\;\;\;\;\;\;\;\;\;\;\;\;\;\;(3,T+2)$}}];

 \draw[thick] (3.5,0) -+ (16.5,0);
       \path (8,0) node[dot, label={below:{$(7,T)$}}];
       \path (8,.5) node [dot, label={below:{$\;\;\;\;\;\;\;\;\;\;\;\;\;\;\;\;\;\;\;(7,T+1)$}}];
       \path (8,1.2) node [dot, label={below:{$\;\;\;\;\;\;\;\;\;\;\;\;\;\;\;\;\;\;\; (7,T+2)$}}];
       \path (8,2.5) node [dot, label={below:{$\;\;\;\;\;\;\;\;\;\;\;\;\;\;\;\;\;\;\; (7,T+3)$}}];
       \path (8,3.3) node [dot, label={below:{$\;\;\;\;\;\;\;\;\;\;\;\;\;\;\;\;\;\;\; (7,T+4)$}}];
       \path (8,3.9) node [dot, label={below:{$\;\;\;\;\;\;\;\;\;\;\;\;\;\;\;\;\;\;\; (7,T+5)$}}];
       \path (8,4.45) node [dot, label={below:{$\;\;\;\;\;\;\;\;\;\;\;\;\;\;\;\;\;\;\; (7,T+6)$}}];
       \draw[thick] (8,0) --++(0,5)
		node[fuzzy, label={above left:{$(7)$}}]; 
\draw[thick] (11.5,0) --++(0,5);
\draw[thick] (12,0) --++(0,5);
\draw[thick] (12.3,0) --++(0,5);
\draw[thick] (12.5,0) --++(0,5);
		
 \draw[thick] (3.5,0) -+ (16.5,0);
       \path (10,0) node[dot, label={below:{$(11,T)$}}];
       \draw[thick] (10,0) --++(0,5)
		node[fuzzy, label={above left:{$(11)$}}]; 
		
\path (11.5,0) node[dot];        
\path (12,0) node[dot];
\path (12.3,0) node[dot];
\path (12.5,0) node[dot];

\draw[thick] (16,0) -- ++(0,5) 
        node[fuzzy, inner sep=9pt, label={above right:{$(0)$}}];
  \draw[thick] 
		(4,1.75) node[dot] 
		to[out=270, in=270]
        (6,1.75)
        to[out=0, in=180] (7,1.375) to[out=0, in=180]
        (8, 1.75)
        to[out=0, in=180] (9,1.375) to[out=0, in=180]
        (10, 1.75)
        to[out=0, in=180] (10.75,1.375) to[out=0, in=180]
        (11.5,1.75)
        to[out=0, in=180] (11.75,1.375) to[out=0, in=180]
        (12,1.75) node[big dot]
        to[out=0, in=180] (12.15,1.375) to[out=0, in=180]
        (12.3,1.75) node[big dot]
        to[out=0, in=180] (12.4,1.375) to[out=0, in=180]
        (12.5, 1.75) node[big dot]
        to[out=0, in=180]
        (16,1.75);
   
   \draw[thick]
        (4,1.75) node[dot, pin={190:{$(2, T+1)$}}]
        to[out=90, in=90]
        (6,1.75) node[big dot]
        to[out=0, in=180] (7,2.125) to[out=0, in=180]
        (8, 1.75) node[big dot]
        to[out=0, in=180] (9,2.125) to[out=0, in=180]
        (10, 1.75) node[big dot]
        to[out=0, in=180] (10.75,2.125) to[out=0, in=180]
        (11.5,1.75) node[big dot]
        to[out=0, in=180] (11.75,2.125) to[out=0, in=180]
        (12,1.75) node[big dot]
        to[out=0, in=180] (12.15,2.125) to[out=0, in=180]
        (12.3, 1.75) node[big dot]
        to[out=0, in=180] (12.4,2.125) to[out=0, in=180]
        (12.5, 1.75) node[big dot]
        to[out=0, in=0]
        (13, 1.75) 
        to[out=0, in=0]
        (16,1.75)
        node[fuzzy, pin={190:{$(T^2+1)$}}];
        
\draw[thick]
        (4,2.83) node[dot]
        to[out=0, in=0]
        (6, 2.83) node[dot]
        to[out=0, in=0]
        (8, 2.83) node[dot]
        to[out=0, in=0]
        (10, 2.83) node[dot]
        to[out=0, in=0]
        (11.5,2.83) node[dot]
        to[out=0, in=0]
        (12,2.83) node[dot]
        to[out=0, in=0]
        (12.3, 2.83) node[dot]
        to[out=0, in=0]
        (12.5, 2.83) node[dot]
        to[out=0, in=0]
        (13, 2.83) 
        to[out=0, in=260]
        (16,3.8)
        node[fuzzy, pin={350:{$(T^2+p)$}}];

\draw [-stealth](10,-1) -- (10,-2);

\draw[thick] (7.1,-8) ++(9,-.8) node{$\aff_{\qq}^1$};
\draw[thick] (2,-8) ++(2,-.8) node{$\aff_{\ff_2}^1$};
\draw[thick] (4,-8) ++(2,-.8) node{$\aff_{\ff_3}^1$};
\draw[thick] (6,-8) ++(2,-.8) node{$\aff_{\ff_5}^1$};
\draw[thick] (8,-8) ++(2,-.8) node{$\aff_{\ff_{7}}^1$};

\draw[thick] (3.5,0) -+ (16.5,0);
       \path (4,-8) node[dot, label={below:{$(2,T)$}}];
       \path (16,-8) node[fuzzy, label={below right:{$(T)$}}];
 		\draw[thick] (4,-8) --++(0,5)
		node[fuzzy, label={above left:{$(2)$}}];     
\draw[thick] (3.5,-8) -+ (16.5,-8);
       \path (4,-8) node[dot, label={below:{$(2,T)$}}];
       \path (16,-8) node[fuzzy, label={below right:{$(T)$}}];
 		\draw[thick] (4,-8) --++(0,5)
		node[fuzzy, label={above left:{$(2)$}}];

 \draw[thick] (3.5,-8) -+ (16.5,-8);
       \path (6,-8) node[dot, label={below:{$(3,T)$}}];
       \draw[thick] (6,-8) --++(0,5)
		node[fuzzy, label={above left:{$(3)$}}];
		\path (6,.69-8) node [dot, label={below:{$\;\;\;\;\;\;\;\;\;\;\;\;\;\;\;\;\;\;\;(3,T+1)$}}];
		\path (6,3.5-7.9) node [dot, label={below:{$\;\;\;\;\;\;\;\;\;\;\;\;\;\;\;\;\;\;\;(3,T+2)$}}];

 \draw[thick] (3.5,-8) -+ (16.5,-8);
       \path (10,-8) node[dot, label={below:{$(7,T)$}}];
       \path (10,.4-8) node [dot, label={below:{$\;\;\;\;\;\;\;\;\;\;\;\;\;\;\;\;\;\;\;(7,T+1)$}}];
       \path (10,1.2-8.3) node [dot, label={below:{$\;\;\;\;\;\;\;\;\;\;\;\;\;\;\;\;\;\;\; (7,T+2)$}}];
       \path (10,2.5-8.]) node [dot,  pin={20:{$(7, T+3)$}}];
       \path (10,3.5-7.9) node [dot, label={below:{$\;\;\;\;\;\;\;\;\;\;\;\;\;\;\;\;\;\;\; (7,T+4)$}}];
       \path (10,4-7.93) node [dot, label={below:{$\;\;\;\;\;\;\;\;\;\;\;\;\;\;\;\;\;\;\; (7,T+5)$}}];
       \path (10,4.5-8) node [dot, label={below:{$\;\;\;\;\;\;\;\;\;\;\;\;\;\;\;\;\;\;\; (7,T+6)$}}];
       \draw[thick] (8,-8) --++(0,5)
		node[fuzzy, label={above left:{$(5)$}}]; 

\draw[thick] (3.5,-8) -+ (16.5,-8);
       \path (8,-8) node[dot, label={below:{$(5,T)$}}];
       \path (8,.-7.6) node [dot, label={below:{$\;\;\;\;\;\;\;\;\;\;\;\;\;\;\;\;\;\;\;(5,T+1)$}}];
       \path (8,.5899-7.5999) node [dot, label={below:{$\;\;\;\;\;\;\;\;\;\;\;\;\;\;\;\;\;\;\; (5,T+2)$}}];
       \path (8,2.92-8.42) node [dot, pin={20:{$(5, T+3)$}}];
       \path (8,3.6-8) node [dot, label={below:{$\;\;\;\;\;\;\;\;\;\;\;\;\;\;\;\;\;\;\; (5,T+4)$}}];
       \path (8,4.3-8) node [dot, label={below:{$\;\;\;\;\;\;\;\;\;\;\;\;\;\;\;\;\;\;\; (5,T+5)$}}];

\draw[thick] (11.5,-8) --++(0,5);
\draw[thick] (12,0-8) --++(0,5);
\draw[thick] (12.3,0-8) --++(0,5);
\draw[thick] (12.5,0-8) --++(0,5);

       \draw[thick] (10,-8) --++(0,5)
		node[fuzzy, label={above left:{$(7)$}}];
\path (11.5,0-8) node[dot];        
\path (12,0-8) node[dot];
\path (12.3,0-8) node[dot];
\path (12.5,0-8) node[dot];

 \draw[thick]
        (4,1.75-8) node[dot]
        to[out=90, in=90]
        (6,1.75-8) node[big dot]
        to[out=0, in=180] (8,15-20.5) to[out=0, in=180]
        (10,1.75-8) node[big dot]
        to[out=0, in=180] (10.75,15-20.5) to[out=0, in=180]
        (11.5,1.75-8) 
        to[out=90, in=90]
        (12,1.75-8) 
        to[out=90, in=90]
        (12.3, 1.75-8)
        to[out=90, in=90]
        (12.5, 1.75-8)
        to[out=0, in=0]
        (13, 1.75-8) 
        to[out=0, in=0]
        (16,1.75-8);
 \draw[thick]
        (4,1.75-8) node[dot, pin={190:{$(2, T+1)$}}]
        to[out=270, in=270]
        (6,1.75-8) node[big dot]
        to[out=0, in=180] (8,8-15) to[out=0, in=180]
        (10, 1.75-8) node[big dot]
        to[out=0, in=180] (10.75,8-15) to[out=0, in=180]
        (11.5,1.75-8) node[big dot]
        to[out=270, in=270]
        (12,1.75-8)  
        to[out=270, in=270]
        (12.3, 1.75-8) 
        to[out=270, in=270]
        (12.5, 1.75-8)node[big dot]
        to[out=0, in=0]
        (13, 1.75-8) 
        to[out=0, in=0]
        (16,1.75-8)
        node[fuzzy, pin={190:{$(T^2+1)$}}];

\draw[thick] (16,0-8) -- ++(0,5) 
        node[fuzzy, inner sep=9pt, label={above right:{$(0)$}}];
\end{tikzpicture}
\caption{If one squints their eyes, we can see an embedding of $\spec \zz_p[T]$ to  $\spec \zz[T]$ for $p \not \equiv 1 \Mod {4}$. }	
\end{figure*}

\bibliographystyle{plain}

\end{document}